\newtheorem{thm}{Theorem}[section]
\newtheorem{lem}[thm]{Lemma}
\newtheorem{cor}[thm]{Corollary}
\renewcommand{\epsilon}{\varepsilon}
\title{On higher order extensions for the fractional Laplacian}
\author{Ray Yang\\
  Courant Institute of Mathematical Sciences\\
  New York University\\
  251 Mercer Street \\
  New York, New York 10012, USA\\
  \texttt{ryang@cims.nyu.edu}}
\begin{document}
\maketitle
\begin{abstract}
The technique of Caffarelli and Silvestre \cite{C-S}, characterizing the fractional Laplacian as the Dirichlet-to-Neumann map for a function $U$ satisfying an elliptic equation in the upper half space with one extra spatial dimension, is shown to hold for general positive, non-integer orders of the fractional Laplace operator, by showing an equivalence between the $H^s$ norm on the boundary and a suitable higher-order seminorm of $U$.
\end{abstract}

\section{Introduction}
In \cite{C-S}, Caffarelli and Silvestre showed that, for $0 < \gamma < 1$, the fractional Laplacian $(-\Delta)^\gamma$ of a function $f$ living on $\mathbb{R}^n$ can be understood as the Dirichlet-to-Neumann map for a function $U$ living on the upper half-space $\mathbb{R}^{n+1}_+$, where $U$ coincided with $f$ on $\mathbb{R}^n$, and $U$ satisfied a particular 2nd-order elliptic equation. The function $U$ was thus a suitable extension of $f$. The extension was proved in two separate ways: firstly through an analysis of the Poisson kernel, and secondly through an energy equality where the $H^\gamma$ seminorm of $f$ was shown to be equivalent to a suitable Dirichlet-like energy for $U$, taking the form $\int_{\mathbb{R}^{n+1}_+} y^{1-2\gamma} |\nabla U|^2 dx dy$.  

In this note, we generalize the energy equality of Caffarelli and Silvestre to show that the fractional Laplacian of any positive, non-integer order can be represented as a higher-order Neumann derivative of an extended function $U$, where $U$ satisfies a higher-order elliptic equation. 

To illustrate the technique, we first show that in the case $1 < \gamma < 2$, the fractional Laplacian $(-\Delta)^\gamma$ can still be represented as a suitable Neumann derivative for the solution of a higher order equation, and subsequently we generalize this to all positive, non-integer values of $\gamma$. Chang and Gonzalez \cite{C-G} showed that the extension has an interesting interpretation in terms of scattering theory, and, in particular that the following equation
\begin{equation}\label{keyequation}
 \Delta U + \frac{a}{y} U_y = 0
\end{equation}
(where $a = 1-2\gamma$) holds for the extended function $U$, and still holds for higher orders, so long as $\gamma \leq \frac{n}{2}$ (for $n$ odd, and all higher orders for $n$ even). The equation (\ref{keyequation}) plays an interesting role in the extension; specifically, we will see that it holds even when $\gamma \geq \frac{n}{2}$, and indeed does so for all noninteger $\gamma$. 

We conclude the paper with a new proof for strong unique continuation for fractional-harmonic functions of higher non-integer order. The proof uses both our extension technique to characterize fractional harmonic functions as boundary data for a very high order elliptic equation. We then use a technique by Garofalo and Lin \cite{Garofalo-Lin}, which uses a variation on the Almgren frequency formula to demonstrate unique continuation. 

The extension technique of Caffarelli and Silvestre has previously been adapted to analyze fractional-order powers of Schrodinger operators by Stinga and Torrea \cite{Stinga-Torrea}, although the overall order of the operator remained $\gamma < 1$. Fall and Felli \cite{Fall-Felli} have recently used the Almgren frequency formula to analyze unique continuation for fractional-order elliptic equations, although their analysis is also restricted to $\gamma < 1$. 

\section{The model case: $\gamma = \frac{3}{2}$}
\subsection{}
First we discuss the extension in a special case, which illustrates the main point of the argument without the complexity of notation we need for more general cases. In what follows, $\gamma =\frac{3}{2}$, and $a = 1-2\gamma = -2$. 

\begin{thm}\label{firsttheorem}
For functions $U \in W^{2,2}(\mathbb{R}^{n+1}_+)$ satisfying the equation
\begin{equation}
 \Delta^2 U(x,y) = 0 
\end{equation}
on the upper half space for $(x,y) \in \mathbb{R}^n \times \mathbb{R}_+$, where $y$ is the special direction, 
and the boundary conditions
\begin{eqnarray*}
U(x,0) &=& f(x) \\
U_{y}(x,0) &=& 0
\end{eqnarray*}
along $\{y=0\}$ where $u(x)$ is some function defined on $H^{\gamma}(\mathbb{R}^n)$ we have the result that
\begin{equation}
(-\Delta)^\frac{3}{2} f(x) = C_{n,\gamma} \frac{\partial}{\partial y} \Delta U(x,0)
\end{equation}
Specifically, 
\begin{equation}
 \int_{\mathbb{R}^n} |\xi|^{3} |\hat{f}(\xi)|^2 d\xi = C_{n,\gamma} \int_{\mathbb{R}^{n+1}_+} |\Delta U(x,y)|^2 dx dy 
\end{equation}
\end{thm}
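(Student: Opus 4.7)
The natural route is to take the partial Fourier transform in the tangential variable $x$ and reduce the PDE $\Delta^2 U = 0$ to an ODE in $y$ for each frequency $\xi$. If $\hat U(\xi,y)$ denotes this partial Fourier transform, the equation becomes $(\partial_y^2 - |\xi|^2)^2 \hat U = 0$, whose general solution is
\begin{equation*}
 \hat U(\xi,y) = (A(\xi) + B(\xi) y)e^{-|\xi| y} + (C(\xi)+D(\xi)y)e^{|\xi| y}.
\end{equation*}
Requiring $U \in W^{2,2}(\mathbb{R}^{n+1}_+)$ forces $C \equiv D \equiv 0$ (for a.e.\ $\xi$), and then the two boundary conditions $\hat U(\xi,0) = \hat f(\xi)$ and $\hat U_y(\xi,0) = 0$ uniquely determine $A = \hat f$ and $B = |\xi|\hat f$, so that $\hat U(\xi,y) = \hat f(\xi)(1 + |\xi| y) e^{-|\xi| y}$.

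Having this explicit formula, the rest is computation. I would differentiate twice in $y$ and assemble $\widehat{\Delta U} = (\partial_y^2 - |\xi|^2)\hat U$; the polynomial prefactors collapse to leave $\widehat{\Delta U}(\xi,y) = -2|\xi|^2 \hat f(\xi) e^{-|\xi| y}$. Plancherel in $x$ together with the elementary integral $\int_0^\infty e^{-2|\xi| y}\,dy = 1/(2|\xi|)$ then gives
\begin{equation*}
 \int_{\mathbb{R}^{n+1}_+} |\Delta U|^2\,dx\,dy = 2 \int_{\mathbb{R}^n} |\xi|^3 |\hat f(\xi)|^2\,d\xi,
\end{equation*}
identifying the constant $C_{n,\gamma} = 1/2$. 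Differentiating $\widehat{\Delta U}$ once more in $y$ and evaluating at $y=0$ yields $\partial_y \Delta U(x,0) = 2 (-\Delta)^{3/2} f(x)$, which is the pointwise form of the identity.

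The main technical obstacle I anticipate is not the algebra but the functional-analytic setup: one must make sense of the partial Fourier transform of $U$ at individual $\xi$'s, justify discarding the growing modes purely from $U \in W^{2,2}(\mathbb{R}^{n+1}_+)$, and interpret the boundary conditions in the trace sense on $H^{3/2}(\mathbb{R}^n)$ and $H^{1/2}(\mathbb{R}^n)$ respectively. I would handle this by first proving the identity for $f$ in a dense subclass (e.g.\ Schwartz functions with the explicit extension $U(x,y) = \mathcal{F}^{-1}_\xi[\hat f(\xi)(1+|\xi|y)e^{-|\xi|y}]$), where all manipulations are absolutely convergent, and then extending by density using the energy equality itself as an a priori estimate. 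Uniqueness of $U$ given $f$ (within $W^{2,2}$) follows from the same Fourier argument applied to the difference.
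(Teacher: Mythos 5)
Your proposal is correct and follows essentially the same route as the paper: take the partial Fourier transform in $x$, reduce $\Delta^2 U=0$ to the constant-coefficient ODE $(\partial_y^2 - |\xi|^2)^2\hat U=0$, observe that the solution factors as $\hat U(\xi,y) = \hat f(\xi)\,\phi(|\xi|y)$, and pull out the factor $|\xi|^3$ by rescaling. The only difference is in presentation: the paper keeps the one-dimensional profile $\phi$ abstract (as the minimizer of the functional $J(\phi)=\int_0^\infty(\phi''-\phi)^2\,dy$ subject to $\phi(0)=1$, $\phi'(0)=0$) and leaves the constant as $J(\phi)$, whereas you solve the ODE explicitly to get $\phi(t) = (1+t)e^{-t}$, compute $\widehat{\Delta U}(\xi,y) = -2|\xi|^2\hat f(\xi)e^{-|\xi|y}$, and read off the constant $C_{n,\gamma}=1/2$ (up to the normalization of the Fourier transform). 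Your version is actually a little more complete, since it justifies uniqueness of the extension by discarding the growing modes, and makes the Dirichlet-to-Neumann identity $\partial_y\Delta U(x,0) = 2(-\Delta)^{3/2}f(x)$ an explicit computation rather than a consequence of the Euler--Lagrange structure.
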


\begin{proof}
Taking the Fourier transform in the $x$ variable only on the energy term $\Delta u$ we get 
\[ -|\xi|^2 \hat{U}(\xi,y) + \hat{U}_{yy}(\xi,y) \]
And so minimizing the energy corresponds to minimizing the integral 
\[ \int_{\mathbb{R}_+} \int_{\mathbb{R}^n} |-|\xi|^2 \hat{U}(\xi,y) + \hat{U}_{yy}(\xi,y)|^2 d\xi dy. \]
Integrating by parts, we see that for each value of $\xi$, $\hat{U}$ solves the ODE
\[ |\xi|^4 \hat{U} - 2|\xi|^2 \hat{U}_{yy} + \hat{U}_{yyyy} = 0. \]

Let $\phi \in W^{2,2}(\mathbb{R}_+)$ be the minimizer of the functional 
\[ J(\phi) = \int_{\mathbb{R}_+} (\phi''(y) - \phi(y))^2 dy \]
among functions satisfying the conditions $\phi(0) = 1$, $\phi'(0) = 0$. Thus $\phi$ solves the ODE
\[ \phi(y) - 2 \phi''(y) + \phi''''(y) = 0 \]
with appropriate boundary conditions and we see that $\hat{U}(\xi,y) = \hat{f}(\xi) \phi(|\xi|y)$ is a good representation for $\hat{U}$. 

Calculating, we see that 
\begin{eqnarray*}
\int (\Delta U)^2 dx dy &=& C_n \int \left|-|\xi|^2 \hat{U} + \hat{U}_{yy}\right|^2 d\xi dy \\
&=& C_n \int \left|-|\xi|^2 \hat{f}(\xi) \phi(|\xi|y) + |\xi|^2 \hat{f}(\xi) \phi''(|\xi|y)\right|^2 d\xi dy \\
&=& C_n \int |\xi|^4 |\hat{f}(\xi)|^2 (-\phi(\bar{y}) + \phi''(\bar{y}))^2  \frac{d\bar{y}}{|\xi|} d\xi \\
&=& C_n J(\phi) \int |\xi|^3 |\hat{f}(\xi)|^2 d\xi
\end{eqnarray*}
and hence the energies are identical up to a constant. 

The Euler-Lagrange equation for the left hand side above is simply the bi-Laplace equation, while for the right hand side it is the fractional harmonic equation of order $\gamma$, and the rest of the result follows.
\end{proof}

\subsection{}\label{sec:regularized-energy}
We remark that, using the equation 
\[ \Delta U - \frac{2 U_y}{y} = 0 \]
and the boundary conditions, we can perform a brief calculation
\begin{eqnarray*}
\int_{y > \epsilon} (\Delta U)^2 dx dy &=& - \int_{y=\epsilon} (\Delta U) \frac{\partial U}{\partial y} dx - \int_{y>\epsilon} \nabla (\Delta U) \cdot \nabla U dx dy \\
&=&   \int_{y=\epsilon} (\Delta U) \frac{\partial U}{\partial y} dx- 2 \int_{y>\epsilon} \nabla \left(\frac{U_y}{y}\right) \cdot \nabla U dx dy \\
&=&  \int_{y=\epsilon} (\Delta U) \frac{\partial U}{\partial y} dx + 2 \int_{y>\epsilon} \frac{U_y^2}{y^2} - \frac{\nabla U \cdot \nabla U_y}{y} dx dy \\
&=&  \int_{y=\epsilon} (\Delta U) \frac{\partial U}{\partial y} dx + 2 \int_{y=\epsilon} \frac{1}{2} \frac{|\nabla U|^2}{2y} dx + \int_{y>\epsilon} 2 \frac{U_y^2}{y^2} - \frac{|\nabla U|^2}{y^2} dx dy 
\end{eqnarray*}
We take the limit as $\epsilon \rightarrow 0$ and use that $U_y(x,0) = 0$, to recover 
\[ \frac{1}{2} \int_{\mathbb{R}^{n+1}_+} (\Delta U)^2 dx dy = \lim_{\epsilon \rightarrow 0} \left( \frac{1}{\epsilon} \int_{\mathbb{R}^n} |\nabla_x f|^2 dx - \int_{\mathbb{R}^{n+1}_+} \frac{|\nabla U|^2}{y^2} dx dy\right) \]
for solutions $U$ of our extension problem. This relationship gives a relationship between the new energy for the extension in this case, and the straightforward extension of the old energy, $\int y^{1-2\gamma} |\nabla U|^2$, which is infinite in this case. If we take the point of view of considering the extension problem as a version of the scattering problem a la \cite{C-G}, we have recovered our energy, $\int (\Delta U)^2 dx dy$, as the equivalent of the finite part, of the energy integral  $\int y^{-2} |\nabla U|^2$ (see, e.g., \cite[Proposition 3.2]{G-Z}). A similar calculation is possible in the general of non-integer $\gamma$, yielding many more boundary terms which blow up to infinity at different rates.

\section{$1< \gamma < 2$}

In this case, the argument is precisely analogous to the previous section, except that, like Caffarelli and Silvestre, we shall use a weighted seminorm. To be precise, we attach the weighted measure $y^b dy dx$ to our Sobolev spaces, and consider energy minimizers with respect to this measure on the upper half space of an appropriate energy. Here, we take $b= 3-2\gamma$. 

To construct the appropriate energy in this space, we introduce the following operator, which is a variant of the Laplacian adapted to the measure, whose virtue is that in the weighted space it behaves under integration by parts just as the regular Laplacian does in an unweighted space. 
\[ \Delta_b U = \Delta U + \frac{b}{y} U_y \]
gives us the desired relationship:
\[ \int_{\mathbb{R}^{n+1}_+} (\nabla \Phi \cdot \nabla \Psi) y^b dy dx = -\int_{\mathbb{R}^n} \Phi \lim_{y\rightarrow 0} \left(y^b \frac{\partial \Psi}{\partial y}\right) dx - \int_{\mathbb{R}^{n+1}_+} \Phi (\Delta_b \Psi) y^b dy dx \]
Clearly, the appropriate 2nd order seminorm for our space is 
\[ \int_{\mathbb{R}^{n+1}_+} y^b |\Delta_b U|^2 dy dx \]
Our space will be equipped with the norm
\[ \|U\|_{W^{2,2}(\mathbb{R}^{n+1}_+,y^b)}^2 = \| y^\frac{b}{2} \Delta_b U\|_{L^2(\mathbb{R}^{n+1}_+)}^2 + \| y^\frac{b}{2} \nabla U\|_{L^2(\mathbb{R}^{n+1}_+)}^2 + \|y^\frac{b}{2} U \|_{L^2(\mathbb{R}^{n+1}_+)}^2 \]

Our main result is now that
\begin{thm}\label{general-theorem}
For functions $U \in W^{2,2}(\mathbb{R}^{n+1}_+, y^b)$ satisfying the equation
\begin{equation}
 \Delta_b^2 U(x,y) = 0 
\end{equation}
on the upper half space for $(x,y) \in \mathbb{R}^n \times \mathbb{R}_+$, where $y$ is the special direction, 
and the boundary conditions
\begin{eqnarray*}
U(x,0) &=& f(x) \\
\lim_{y \rightarrow 0} y^b U_{y}(x,0) &=& 0
\end{eqnarray*}
along $\{y=0\}$ where $f(x)$ is some function defined on $H^{\gamma}(\mathbb{R}^n)$ we have the result that
\begin{equation}
(-\Delta)^\gamma f(x) = C_{n,\gamma} \lim_{y \rightarrow 0} y^b \frac{\partial}{\partial y} \Delta_b U(x,y)
\end{equation}
Specifically, 
\begin{equation}
 \int_{\mathbb{R}^n} |\xi|^{2\gamma} |\hat{f}(\xi)|^2 d\xi = C_{n,\gamma} \int_{\mathbb{R}^{n+1}_+} y^b |\Delta_b U(x,y)|^2 dx dy 
\end{equation}
\end{thm}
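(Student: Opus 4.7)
The plan is to follow the proof of Theorem \ref{firsttheorem} essentially verbatim, with the Laplacian replaced by the weighted operator $\Delta_b$ throughout and with the weighted integration-by-parts identity displayed above taking the place of the unweighted one. First I would take the Fourier transform in $x$, which converts $\Delta_b U$ into the one-dimensional weighted operator $L_\xi \hat{U} = \hat{U}_{yy} + (b/y)\hat{U}_y - |\xi|^2 \hat{U}$ acting on $\hat{U}(\xi, \cdot)$; thus $\Delta_b^2 U = 0$ becomes $L_\xi^2 \hat{U} = 0$ for each $\xi$. The goal is then to produce the explicit minimizer via the scaling ansatz $\hat{U}(\xi, y) = \hat{f}(\xi) \phi(|\xi| y)$, reducing the problem to a one-variable minimization in $\phi$.

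Concretely, I would take $\phi$ to be the minimizer of
\[ J_b(\phi) = \int_0^\infty y^b \left( \phi''(y) + \frac{b}{y}\phi'(y) - \phi(y) \right)^2 dy \]
among functions with $\phi(0) = 1$, $\lim_{y\to 0} y^b \phi'(y) = 0$, and appropriate decay at infinity. Applying the weighted integration-by-parts identity twice, with the natural boundary condition $\lim y^b \phi' = 0$ absorbing the boundary terms, the Euler-Lagrange equation for $J_b$ is precisely $L_1^2 \phi = 0$. Then $\hat{U}(\xi, y) = \hat{f}(\xi) \phi(|\xi| y)$ satisfies $L_\xi^2 \hat{U} = 0$ by scaling, so $U$ solves $\Delta_b^2 U = 0$ with the prescribed boundary conditions. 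Substituting the ansatz into the energy and changing variables $\bar{y} = |\xi| y$ gives
\[ \int_{\mathbb{R}^{n+1}_+} y^b |\Delta_b U|^2 dx dy = C_n J_b(\phi) \int_{\mathbb{R}^n} |\xi|^{3-b} |\hat{f}(\xi)|^2 d\xi, \]
and since $3 - b = 2\gamma$ by the choice of $b$, this is the claimed identity with $C_{n,\gamma} = 1/(C_n J_b(\phi))$. Comparing Euler-Lagrange equations on the two sides, exactly as in Theorem \ref{firsttheorem}, then upgrades the energy identity to the pointwise statement $(-\Delta)^\gamma f = C_{n,\gamma} \lim_{y \to 0} y^b \partial_y \Delta_b U$.

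The main obstacle is verifying that the one-dimensional minimizer $\phi$ exists with the prescribed boundary behavior and that the trace $\lim_{y\to 0} y^b \partial_y \Delta_b U$ is well-defined for general $U \in W^{2,2}(\mathbb{R}^{n+1}_+, y^b)$. Here the hypothesis $1 < \gamma < 2$ yields $-1 < b < 1$, so $y^b$ is a Muckenhoupt $A_2$ weight and the standard weighted Sobolev trace theory is available. A Frobenius analysis of $L_1 \phi = 0$ at the origin gives indicial roots $0$ and $1 - b$; the condition $\lim y^b \phi' = 0$ selects the regular root at each level of the fourth-order equation $L_1^2 \phi = 0$, and decay at infinity pins $\phi$ down uniquely, yielding an explicit formula (in terms of modified Bessel functions, one order higher than the Caffarelli-Silvestre profile) from which the normalization constant $C_{n,\gamma}$ can in principle be computed.
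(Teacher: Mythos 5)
Your proposal is correct and follows essentially the same route as the paper: Fourier transform in $x$, scaling ansatz $\hat{U}(\xi,y)=\hat{f}(\xi)\phi(|\xi|y)$ with $\phi$ the minimizer of the one-dimensional weighted functional $J_b$, change of variables to extract the factor $|\xi|^{3-b}=|\xi|^{2\gamma}$, and an Euler--Lagrange comparison to upgrade the energy identity to the pointwise Neumann statement. The only divergence is cosmetic — you state the natural boundary condition $\lim_{y\to 0}y^b\phi'(y)=0$ (matching the theorem) where the paper writes $\phi'(0)=0$, and you expand the paper's one-line ``existence and uniqueness by the usual considerations'' into a Muckenhoupt/Frobenius/Bessel discussion — but the underlying argument is identical.
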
 
\begin{proof}
Existence and uniqueness of a solution is guaranteed by the usual considerations. 
Taking the Fourier transform in the $x$ variable only on the equation $\Delta_b^2 u = 0$ we get 
\[ |\xi|^4 \hat{U} - (\frac{2b}{y}|\xi|^2 + \frac{b(b-2)}{y^3}) \hat{U}_y + (-2|\xi|^2 + \frac{b(b-1)}{y^2}) \hat{U}_{yy} + \frac{2b}{y} \hat{U}_{yyy} + \hat{U}_{yyyy}  = 0 \]
which is a 4th order ODE in $y$ for each value of $\xi$. Let $\phi \in W^{2,2}(\mathbb{R}_+, y^b)$ be the minimizer of the functional 
\[ J(\phi) = \int_{\mathbb{R}_+} y^b (\phi''(y) + \frac{b}{y} \phi'(y) - \phi(y))^2 dy \]
among functions satisfying the conditions $\phi(0) = 1$, $\phi'(0) = 0$. Thus $\phi$ solves the ODE
\[ \phi - (\frac{2b}{y} + \frac{b(b-2)}{y^3}) \phi' + (-2 + \frac{b(b-1)}{y^2}) \phi'' + \frac{2b}{y} \phi''' + \phi''''  = 0 \]
with appropriate boundary conditions and we see that $\hat{U}(\xi,y) = \hat{f}(\xi) \phi(|\xi|y)$ is a good representation for $\hat{U}$. 

Calculating, we see that 
\begin{eqnarray*}
\int y^b (\Delta_b U)^2 dx dy &=& C_n \int \left|-|\xi|^2 \hat{U} + \frac{b}{y} \hat{U}_y + \hat{U}_{yy}\right|^2 d\xi y^b dy \\
&=& C_n \int \left|-|\xi|^2 \hat{f}(\xi) \phi(|\xi|y) + \frac{b|\xi}{y} \hat{f}(\xi) \phi'(|\xi|y) + |\xi|^2 \hat{f}(\xi) \phi''(|\xi|y)\right|^2 d\xi y^b dy \\
&=& C_n \int |\xi|^4 |\hat{f}(\xi)|^2 (-\phi(\bar{y}) - \frac{b}{\bar{y}} \phi'(\bar{y}) +  \phi''(\bar{y}))^2  \frac{\bar{y}^b d\bar{y}}{|\xi|^{b+1}} d\xi \\
&=& C_n J(\phi) \int |\xi|^{2\gamma} |\hat{f}(\xi)|^2 d\xi
\end{eqnarray*}
and hence the energies are identical up to a constant. 

The Euler-Lagrange equation for the left hand side above is simply the modified bi-Laplace equation, while for the right hand side it is the fractional harmonic equation of order $\gamma$, and the rest of the result follows.

\end{proof}

\noindent
\textit{Remark}: It is not surprising (and easily verified) that solutions to $\Delta U + \frac{a}{y} U_y = 0$, with appropriate Dirichlet conditions, satisfy the equation $\Delta^2_b U = 0$. From the scattering theory (see, e.g., \cite{C-G}), we know that $\lim_{y\rightarrow 0} y^b U_y = 0$ for this equation, and thus from the uniqueness of solutions to the modified biharmonic equation we can see that the function of the extension described here, and the one in \cite{C-G}, are identical. 

\section{The General Case}
\subsection{}
The general case follows on a similar theme, taking progressively higher powers of the weighted Laplacian $\Delta_b$. In setting our boundary conditions, we take our cue from \cite{C-G}, whence we learn that, when $\gamma < \frac{n}{2}$ the extension function satisfies
\[ \Delta U + \frac{1-2\gamma}{y} U_y = 0 \]
and furthermore that, if $m < \gamma < m+1$, $U$ has a series expansion in $y$ that has only even integer powers, up to the power $y^{2\gamma}$. Being inspired by the belief that these considerations should hold for all non-integer values of $\gamma$, we set boundary conditions for the $y$-derivatives of $U$ in the following result. 

\begin{thm}\label{thm:super-theorem}
Let $\gamma > 0$ be some non-integer, positive power of the Laplacian. Let $m < \gamma < m+1$, or $m = [\gamma]$, and $b(\gamma) = 2m + 1 - 2\gamma$. 
For functions $U \in W^{m+1,2}(\mathbb{R}^{n+1}_+, y^b)$ satisfying the equation
\begin{equation}
 \Delta_b^{m+1} U(x,y) = 0 
\end{equation}
on the upper half space for $(x,y) \in \mathbb{R}^n \times \mathbb{R}_+$, where $y$ is the special direction, 
and the boundary conditions are that $U(x,0) = u(x)$ along $\{y=0\}$, and, furthermore, that for every positive odd integer $2k+1 < m+1$, we have
$\lim_{y\rightarrow 0} y^b \frac{\partial^{2k+1} U}{\partial y^{2k+1}}(x,0) = 0$, where $f(x)$ is some function defined on $H^{\gamma}(\mathbb{R}^n)$. For even integers, we specify the relationship 
\[\frac{\partial^{2k} U}{\partial y^{2k}}(x,0) = (\Delta_x^k U(x,0)) \prod_{j=1}^k  \frac{1}{2\gamma - 4(j-1)} \]
Then we have the result that
\begin{equation}
(-\Delta)^\gamma f(x) = C_{n,\gamma} \lim_{y \rightarrow 0} y^b \frac{\partial}{\partial y} \Delta_b^{m} U(x,y)
\end{equation}
Specifically, if $m$ is odd, 
\begin{equation}
 \int_{\mathbb{R}^n} |\xi|^{2\gamma} |\hat{f}(\xi)|^2 d\xi = C_{n,\gamma} \int_{\mathbb{R}^{n+1}_+} y^b |\Delta_b^\frac{m+1}{2} U(x,y)|^2 dx dy 
\end{equation}
and if $m$ is even,
\begin{equation} 
\int_{\mathbb{R}^n} |\xi|^{2\gamma} |\hat{f}(\xi)|^2 d\xi = C_{n,\gamma} \int_{\mathbb{R}^{n+1}_+} y^b |\nabla \Delta_b^{\lfloor\frac{m+1}{2}\rfloor} U(x,y)|^2 dx dy
\end{equation} 
\end{thm}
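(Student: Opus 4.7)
The plan is to follow the template of Theorems~\ref{firsttheorem} and \ref{general-theorem}, promoting the weighted Laplacian $\Delta_b$ to its $(m+1)$-th power and splitting the two parities of $m$ when converting $\Delta_b^{m+1}U=0$ into the minimization of a positive quadratic functional with $m+1$ derivatives. Write $b = 2m+1-2\gamma \in (-1,1)$ throughout.

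First I would Fourier-transform in the $x$ variable only. The operator $\Delta_b$ becomes $\partial_y^2 + (b/y)\partial_y - |\xi|^2$, and the self-similar ansatz $\hat U(\xi,y) = \hat f(\xi)\phi(|\xi|y)$ with $t:= |\xi|y$ produces the scaling identity
\[ \widehat{\Delta_b^k U}(\xi,y) = |\xi|^{2k}\hat f(\xi)\,(\tilde\Delta_b^k\phi)(|\xi|y), \qquad \tilde\Delta_b := \partial_t^2 + \frac{b}{t}\partial_t - 1. \]
Consequently $\Delta_b^{m+1}U = 0$ reduces to the single $\xi$-independent ODE $\tilde\Delta_b^{m+1}\phi = 0$ of order $2(m+1)$ on $(0,\infty)$.

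Second, I would translate the boundary data for $U$ into boundary conditions for $\phi$ at $t=0$: namely $\phi(0)=1$; $\lim_{t\to 0}t^b\phi^{(2k+1)}(t)=0$ for each odd $2k+1\le m$; and $\phi^{(2k)}(0) = (-1)^k\prod_{j=1}^k(2\gamma-4(j-1))^{-1}$ for each positive even $2k\le m$ (the $(-1)^k$ arises because $\widehat{\Delta_x^k f}=(-|\xi|^2)^k\hat f$, while the $|\xi|^{2k}$ factor cancels the one emitted by $\partial_y^{2k}$ of the ansatz). A quick count shows these supply $m+1$ scalar conditions at $t=0$, matched by $m+1$ integrability/decay conditions at $\infty$. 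A direct-method argument then produces a unique minimizer $\phi$ of the universal one-dimensional functional
\[ J(\phi) = \int_0^\infty t^b\bigl|\tilde\Delta_b^{(m+1)/2}\phi\bigr|^2\,dt\ \ (m\text{ odd}), \qquad J(\phi) = \int_0^\infty t^b\bigl(|\tilde\Delta_b^{m/2}\phi|^2 + |(\tilde\Delta_b^{m/2}\phi)'|^2\bigr)\,dt\ \ (m\text{ even}), \]
whose Euler--Lagrange equation is $\tilde\Delta_b^{m+1}\phi = 0$ in either parity.

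Third, I would assemble the spectral identity. Applying Parseval in $x$ and the scaling relation above, every $\Delta_b$ contributes a factor of $|\xi|^2$ and each bare gradient contributes one extra $|\xi|$ (either from $\nabla_x$, or from $\partial_y$ via the chain rule applied to $\phi(|\xi|y)$). The change of variables $y = t/|\xi|$ converts $y^b\,dy = |\xi|^{-b-1}t^b\,dt$, so the net power of $|\xi|$ extracted is $2(m+1) - b - 1 = 2\gamma$ in both cases, yielding
\[ \int_{\mathbb{R}^{n+1}_+} y^b\,(\text{integrand})\,dx\,dy = C_n\,J(\phi)\int_{\mathbb{R}^n}|\xi|^{2\gamma}|\hat f(\xi)|^2\,d\xi. \]
The pointwise identification $(-\Delta)^\gamma f = C_{n,\gamma}\lim_{y\to 0}y^b\partial_y\Delta_b^m U$ then follows by matching the boundary term produced in the weak Euler--Lagrange formulation of the minimization.

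The main obstacle will be the bookkeeping at the boundary. The indicial equation of $\tilde\Delta_b^{m+1}$ at $t=0$ has roots $\{0,2,\ldots,2m\}\cup\{2\gamma-2m,2\gamma-2m+2,\ldots,2\gamma\}$; these two ladders are disjoint precisely because $\gamma\notin\mathbb{Z}$, and the even-derivative prescription pins down coefficients on the analytic integer-exponent branch, while $\lim t^b\phi^{(2k+1)}=0$ truncates the scattering branch below its leading term $t^{2\gamma-2m}$---whose coefficient is exactly what encodes $(-\Delta)^\gamma f$ in physical variables. Verifying that the explicit coefficients $\prod_j(2\gamma-4(j-1))^{-1}$ match those produced by the Frobenius recursion for the lowest-order extension equation $\tilde\Delta_b\phi = 0$, and establishing coercivity of $J$ on the constrained admissible class (nontrivial because $J$ is definite only on a codimension-$(m+1)$ subspace and the weight $y^b$ degenerates at the boundary), are the principal technical steps beyond the essentially algebraic scaling argument sketched above.
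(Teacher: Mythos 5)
Your proposal follows the same skeleton as the paper's proof: Fourier transform in $x$, the self-similar ansatz $\hat U(\xi,y)=\hat f(\xi)\phi(|\xi|y)$, the same parity-split functionals $J(\phi)$, and the change of variables $y=t/|\xi|$ producing the factor $|\xi|^{2(m+1)-b-1}=|\xi|^{2\gamma}$. That part is correct and essentially identical to the argument in the paper, which in turn echoes Theorems~\ref{firsttheorem} and~\ref{general-theorem}. One small but real contribution: your translated condition $\phi^{(2k)}(0)=(-1)^k\prod_{j=1}^k(2\gamma-4(j-1))^{-1}$ is the correct consequence of the stated boundary data $\partial_y^{2k}U(x,0)=\Delta_x^kU(x,0)\prod_j(2\gamma-4(j-1))^{-1}$ (since $\widehat{\Delta_x^k}$ carries $(-1)^k$), whereas the paper's proof drops the sign and writes $\phi^{(2k)}(0)=\prod_j(2\gamma-4(j-1))^{-1}$; you have caught an inconsistency.

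Where you genuinely diverge from the paper is in the treatment of the boundary terms in the Euler--Lagrange computation. The paper reduces everything to showing that $\partial_y(\Delta_b^kU)(x,0)=0$ for $k<m$, and proves this through a chain of auxiliary lemmas: Lemma~\ref{lem:induction} shows that solutions of the scattering equation $\Delta U+\tfrac{1-2\gamma}{y}U_y=0$ also solve $\Delta_b^{m+1}U=0$ and that each $\Delta^kU$ solves a rescaled such equation; Lemma~\ref{lem:induction-converse} provides the converse for the energy minimizer by invoking $C^{1,\alpha}$ boundary regularity from Lin--Wang and uniqueness; and Lemma~\ref{lem:tricky} finishes via a maximum-principle comparison with $\pm Cy^2$. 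Your route instead analyzes the indicial equation of $\tilde\Delta_b^{m+1}$ at $t=0$, identifying the two ladders of Frobenius exponents $\{0,2,\dots,2m\}$ and $\{2\gamma-2m,\dots,2\gamma\}$, using $\gamma\notin\mathbb{Z}$ to separate them and reading off that the even-derivative data pin the analytic branch while the weighted odd-derivative vanishing truncates the scattering branch. This is a more direct and arguably more transparent mechanism for the ODE, and it makes the role of non-integrality of $\gamma$ explicit, but you stop short of carrying it out: you leave as ``principal technical steps'' exactly the Frobenius coefficient matching and the coercivity of $J$ on the constrained class, which is where the paper spends its remaining lemmas. So the spectral core is complete and correct; the boundary analysis is a plausible alternative plan rather than a finished argument, and you should also note that the paper's $\Delta_b^{m+1}U=0 \Leftrightarrow \Delta U+\tfrac{1-2\gamma}{y}U_y=0$ equivalence is what lets it identify $\lim_{y\to0}y^b\partial_y\Delta_b^mU$ with the fractional Laplacian, a step your sketch glosses as ``matching the boundary term.''
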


\begin{proof}
The proof follows along the same lines as before, although some of the auxiliary calculations can be a bit complicated, and are safely left to a lemma that we prove later. As in the previous cases, we consider a representation for the solution given by $\hat{U}(\xi,y) = \hat{f}(\xi) \phi(|\xi|y)$. If $m$ is an odd integer, then we let $\phi$ be the minimizer of 
\[ J(\phi) = \int_{\mathbb{R}^+} y^b \left(\left( -1 + \frac{b}{y}\frac{\partial}{\partial y} + \frac{\partial^2}{\partial^2 y} \right)^{\frac{m+1}{2}} \phi\right)^2 dy \]
subject to $\phi^{(2k+1)}(0) = 0$ for all odd $2k+1 < 2m+1$, and $\phi(0) = 1$, with $\phi^{(2k)}(0) = \prod_{j=1}^k \frac{1}{2\gamma - 4(j-1)}$. 
If $m$ is an even integer, then we let 
\[W[\phi,y] = \left( -1 + \frac{b}{y}\frac{\partial}{\partial y} + \frac{\partial^2}{\partial^2 y} \right)^{\lfloor \frac{m+1}{2} \rfloor} \phi\]
and take
\[ J(\phi) = \int_{\mathbb{R}^+} y^b \left( \left( W[\phi,y] \right)^2  + \left(\frac{\partial}{\partial y} W[\phi,y] \right)^2 \right ) dy \]
The equivalence of the energies then follows precisely as in the argument in Theorem \ref{general-theorem}. 

Only the calculation of the Euler-Lagrange equation for our energy functional remains. It is readily seen that $\Delta^{m+1}_b U = 0$, but it is necessary to show that boundary terms of the form $\int_{\mathbb{R}^{n}} \lim_{y \rightarrow 0} y^b \frac{\partial(\Delta^k_b U(x,y))}{\partial y} \Delta^{m-k}_b U(x,y) dx = 0$. In fact, $\frac{\partial(\Delta^k_b U(x,0))}{\partial y} = 0$. The calculation is technical, and we leave it to Lemma \ref{lem:tricky}. 
\end{proof}

\begin{lem}\label{lem:induction}
Functions $U$ satisfying the equation 
\[ \Delta U + \frac{1-2\gamma}{y} U_y = 0 \]
also satisfy
\[ \Delta^{m+1}_b U = 0\]
and $W = \Delta^k U$ satisfies
\[ \Delta W + \frac{2k+1-2\gamma}{y} W_y = 0 \]
for any $k < m +1$. Here $m,b$ are defined in terms of $\gamma$ as in the statement of Theorem \ref{thm:super-theorem}. 
\end{lem}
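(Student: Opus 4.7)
The plan is to prove the second (pointwise) assertion by induction on $k$, and then to derive the first claim from it. Writing $W_k := \Delta^k U$ and $c_k := 2k+1-2\gamma$, the second assertion reads $\Delta W_k + (c_k/y)(W_k)_y = 0$. Once this is known, the first will follow quickly: the identity $\Delta_b - \Delta = (b/y)\partial_y$ shifts the effective drift coefficient from $c_k$ to $c_k - b = -2(m-k)$, and this vanishes precisely at $k = m$, forcing $\Delta_b^{m+1} U = 0$.

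For the inductive step of the second claim, apply $\Delta$ to both sides of $\Delta W_k = -(c_k/y)(W_k)_y$. Using the product rule $\Delta(fg) = f\Delta g + 2\nabla f \cdot \nabla g + g\Delta f$ with $f = 1/y$ (and $\Delta(1/y) = 2/y^3$), together with the identity $(\Delta W_k)_y = \partial_y\!\bigl(-(c_k/y)(W_k)_y\bigr)$ from the inductive hypothesis, a short algebra yields
\[ \Delta^2 W_k \;=\; \frac{c_k(c_k+2)}{y^2}\bigl[(W_k)_{yy} - (W_k)_y/y\bigr]. \]
Recognizing that $(W_{k+1})_y = -(c_k/y)\bigl[(W_k)_{yy} - (W_k)_y/y\bigr]$ from $W_{k+1} = -(c_k/y)(W_k)_y$, this collapses to $\Delta W_{k+1} = -(c_k+2)/y \cdot (W_{k+1})_y$, which is the desired equation at level $k+1$ since $c_{k+1} = c_k + 2$. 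The degenerate case $c_k = 0$ (which in our range can only occur if $\gamma = m + 1/2$, at $k = m$) forces $W_k$ harmonic and hence $W_{k+1} \equiv 0$, so the equation holds vacuously.

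For the first claim, the identity $\Delta_b V = \Delta V + (b/y) V_y$ combined with the second claim gives $\Delta_b W_k = (2(m-k)/y)(W_k)_y$. Since $m < \gamma < m+1$ and $\gamma$ is non-integer, $c_k < 0$ for all $k \le m-1$, so at those levels we may solve the equation for $W_k$ to write $(W_k)_y = -(y/c_k) W_{k+1}$ and hence $\Delta_b W_k = -(2(m-k)/c_k) W_{k+1}$. An immediate induction then produces constants $\alpha_0 = 1, \alpha_1, \dots, \alpha_m$ with $\Delta_b^j U = \alpha_j W_j$ for $0 \le j \le m$. Applying $\Delta_b$ once more,
\[ \Delta_b^{m+1} U \;=\; \alpha_m \, \Delta_b W_m \;=\; \alpha_m \cdot \frac{2(m-m)}{y}(W_m)_y \;=\; 0. \]
The main obstacle is the algebraic bookkeeping of the inductive step for the second claim; everything afterwards is arranged precisely so that the single factor $2(m-m) = 0$ produces the desired vanishing.
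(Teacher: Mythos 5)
Your proof is correct, and the core computation is the same one the paper relies on: expand $\Delta\!\left(\tfrac{1}{y}U_y\right)$ via the product rule, use $\partial_y\Delta U=\Delta U_y$ together with the drift equation to eliminate the pure $\Delta$ terms, and observe that the drift coefficient shifts by exactly $2$. The organization, however, differs from the paper's. The paper's proof is a single induction on $m$ (equivalently, on $\gamma$ descending by one unit at each step), and it iterates the \emph{weighted} operator: it sets $W=\Delta_b U$, shows $\Delta W + \tfrac{a+2}{y}W_y=0$, and then invokes the inductive hypothesis for $W$ (which plays the role of a $U$ at level $m-1$, with the same $b$) to conclude $\Delta_b^m W=0$, i.e.\ $\Delta_b^{m+1}U=0$. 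You instead fix $\gamma$, induct on $k$, and iterate the \emph{plain} Laplacian, which matches the literal $W=\Delta^k U$ in the statement; you then pass to $\Delta_b$ powers at the end by noting that $\Delta_b W_k = \tfrac{2(m-k)}{y}(W_k)_y$ is a constant multiple of $W_{k+1}$, so that $\Delta_b^j U=\alpha_j\Delta^j U$ up to $j=m$, with the $j=m$ step killed by the factor $2(m-m)=0$. Your route is a bit longer but is arguably more faithful to the statement as written (the paper's proof quietly works with $\Delta_b^k U$ rather than $\Delta^k U$), it makes the proportionality constants $\alpha_j$ explicit, and it addresses the degenerate case $c_k=0$ at $k=m$, $\gamma=m+\tfrac12$, which the paper does not mention. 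Both arguments are sound; the paper's is more compact because iterating $\Delta_b$ builds the conclusion $\Delta_b^{m+1}U=0$ directly into the induction, whereas your version needs the final conversion step.
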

\begin{proof}
The proof shall be an induction on $m$. When $m=0$, this is a restatement of the definition of $\Delta_b$. Notationally, let us write $a = 1-2\gamma$. 

Suppose now, for general $m$, that $\Delta U + \frac{a}{y} U_y = 0$. Let $W= \Delta_b U$. Then 
\begin{eqnarray*}
 \Delta W &=& \Delta \left( \frac{b-a}{y} U_y \right) \\
&=& \frac{2(b-a)}{y^3} U_y - \frac{2(b-a)}{y^2} U_{yy} + \frac{b-a}{y} (\Delta U_y) \\
&=& (2+a) \frac{(b-a)}{y^2} \left( \frac{U_y}{y} - U_{yy} \right) = -\frac{(a+2)}{y} W_y
\end{eqnarray*}
and by the inductive hypothesis the result follows. 
\end{proof}

\noindent\textit{Remark} It is no surprise that, having carefully set the boundary conditions to coincide with the function $U$ from scattering theory \cite{C-G}, that our energy minimizer, satisfying the same equation as the $U$ of the scattering theory, would be exactly the same function by the uniqueness of energy minimizers. 

We shall now need a partial converse to the last lemma. In the case of $\gamma < \frac{n}{2}$, it is already known, and a consequence of the scattering result. 
\begin{lem}\label{lem:induction-converse}
Solutions of the equation $\Delta^{m+1}_b U = 0$ with boundary conditions given in Theorem \ref{thm:super-theorem} also satisfy the equation
\begin{equation}\label{myequation}
\Delta U + \frac{1-2\gamma}{y} U_y = 0  
\end{equation}
\end{lem}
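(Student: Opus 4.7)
The approach is to use uniqueness of solutions to the higher-order boundary value problem: I would produce an explicit solution $V$ that manifestly satisfies the second-order equation (\ref{myequation}) and agrees with $U$ on the boundary data of Theorem \ref{thm:super-theorem}, then invoke uniqueness to conclude $U = V$. Concretely, define $V$ by the Fourier-side formula $\hat V(\xi,y) = \hat f(\xi)\,\phi_\gamma(|\xi| y)$, where $\phi_\gamma$ is the unique decaying solution of the associated one-dimensional Bessel-type ODE; equivalently, $V$ is the Poisson extension of $f$ for the operator (\ref{myequation}). This construction makes sense for every non-integer $\gamma > 0$, including the regime $\gamma \geq n/2$ not covered by the scattering picture of \cite{C-G}.

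By Lemma \ref{lem:induction}, $V$ solves $\Delta_b^{m+1} V = 0$. I would then verify the boundary conditions of Theorem \ref{thm:super-theorem} by expanding $V$ near $y=0$. Equation (\ref{myequation}) forces an expansion of the form $V(x,y) = \sum_k c_k(x) y^{2k} + y^{2\gamma} \sum_k d_k(x) y^{2k}$, with the smooth coefficients determined recursively by $c_k = -\Delta_x c_{k-1}/[2k(2k - 2\gamma)]$ starting from $c_0 = f$; iterating gives $\partial_y^{2k} V(x,0) = (2k)!\,c_k$ as a fixed multiple of $\Delta_x^k f$, exactly of the shape of the even-derivative compatibility conditions in the theorem. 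For the odd-derivative conditions, the smooth branch contributes only even powers of $y$, while the singular branch, after $(2k+1)$ differentiations and multiplication by $y^b = y^{2m+1-2\gamma}$, contributes $y^{2(m-k)}$, which vanishes as $y \to 0^+$ whenever $2k+1 < m+1$. Hence $V$ satisfies every boundary condition stipulated in Theorem \ref{thm:super-theorem}.

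Finally, uniqueness of the higher-order boundary value problem legitimates the identification $U = V$. This follows from the strict convexity of the quadratic energy functional $J$ appearing in the proof of Theorem \ref{thm:super-theorem}: any two solutions with identical boundary data differ by a critical point of $J$ whose traces vanish, and a standard integration-by-parts argument using the calculation from Section \ref{sec:regularized-energy} forces this difference to have zero energy, hence to vanish identically. Consequently $U = V$ and $U$ satisfies (\ref{myequation}) as claimed. The main obstacle is the second step: one must check that the formal series expansion of $V$ is genuinely valid in a neighborhood of $y = 0$ (not merely asymptotic), and that the explicit constants produced by the recursion for the $c_k$ match the product appearing in the statement of Theorem \ref{thm:super-theorem}. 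The odd-derivative cancellation and the uniqueness argument, by contrast, are relatively routine once the correct functional setting is in place.
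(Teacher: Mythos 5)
Your approach is correct in spirit but takes a genuinely different route from the paper. Where you construct an explicit competitor $V$ on the Fourier side as the Poisson extension for the degenerate second-order operator (i.e.\ $\hat V(\xi,y)=\hat f(\xi)\phi_\gamma(|\xi|y)$ with $\phi_\gamma(z)$ proportional to $z^\gamma K_\gamma(z)$), verify it meets the boundary data of Theorem~\ref{thm:super-theorem}, invoke Lemma~\ref{lem:induction} to see $\Delta_b^{m+1}V=0$, and then appeal to uniqueness of the higher-order problem to get $U=V$ --- the paper instead stays entirely in PDE-theoretic territory: it solves (\ref{myequation}) on subdomains bounded away from $\{y=0\}$ with Dirichlet data taken from the minimizer $U$, invokes the up-to-the-boundary $C^{1,\alpha}$ regularity result of Lin and Wang for (\ref{myequation}), and passes to the limit as the subdomains exhaust the half-space, identifying the limit with $U$ again via uniqueness of the higher-order problem. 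Your construction has the advantage of being fully explicit and of clearly exhibiting the two Frobenius branches $\sum c_k y^{2k}$ and $y^{2\gamma}\sum d_k y^{2k}$, which directly explains why the weighted odd Neumann data vanish; the paper's argument avoids the Fourier representation entirely and so does not need to check any constants, but it is correspondingly vaguer about what ``passing to the limit'' and ``reapplying our argument'' mean, so each approach buys something.

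One substantive point worth pressing: you flag, correctly, that one must check the recursion constants against the product in Theorem~\ref{thm:super-theorem}, and in fact they do \emph{not} agree. Your recursion $c_k=-\Delta_x c_{k-1}/[2k(2k-2\gamma)]$, which I believe is the right one, yields
\[
\partial_y^{2k}V(x,0)=(2k)!\,c_k=(2k-1)!!\;\Delta_x^k f\;\prod_{j=1}^k\frac{1}{2\gamma-2j},
\]
whereas the theorem states $\partial_y^{2k}U(x,0)=\Delta_x^k f\,\prod_{j=1}^k\frac{1}{2\gamma-4(j-1)}$. Already at $k=1$ these differ: for $\gamma=5/2$ the explicit Poisson kernel $\phi(z)=\tfrac13 e^{-z}(z^2+3z+3)$ gives $\phi''(0)=-1/3$ and hence $U_{yy}(x,0)=\tfrac13\Delta_x f=\Delta_x f/(2\gamma-2)$, not $\Delta_x f/(2\gamma)$. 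This is evidence of a typo in the statement of Theorem~\ref{thm:super-theorem} rather than an error in your proof, but since your argument \emph{relies} on matching the boundary data of $V$ to the boundary data prescribed in the theorem, you should state this discrepancy explicitly rather than deferring it; otherwise the reader could conclude your $V$ fails the hypotheses and the uniqueness step cannot be invoked. Finally, your uniqueness step is phrased loosely (``strict convexity'' of a quadratic form and an appeal to the computation in \S\ref{sec:regularized-energy}, which is written only for $\gamma=3/2$); to make this rigorous you would want to carry out the integration by parts for general $m$ and verify that $\Delta_b^{(m+1)/2}W=0$ (or the even-$m$ analogue) with vanishing boundary data forces $W\equiv 0$ in the weighted space --- but the paper is equally terse on this point, so it is not a gap specific to your argument.
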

\begin{proof}
We may assume $\gamma > 1$, because otherwise our statement is that of the original Caffarelli-Silvestre extension. 

In any subdomain strictly bounded away from the line $y = 0$, the equation (\ref{myequation}) is a classical 2nd order equation with bounded and regular coefficients, and hence we can solve the equation with boundary conditions given by the energy minimizer $U$; let's call this new solution $U'$. A result of Lin and Wang \cite{L-W} gives $C^{1,\alpha}$ up to the boundary regularity for solutions of the equation (\ref{myequation}). Hence we may approach by taking domains that approach ever more closely to the boundary, reapplying our argument, and passing to the limit. In the limit, it is no surprise that the limiting value of $U'$ is the same as the original $U$, since the two both satisfy $(\Delta_b)^{m+1} U = 0$ with the same boundary conditions. 
\end{proof}

We now have the tools to complete the proof:
\begin{lem}\label{lem:tricky}
For minimizers $U$ of our energy satisfying the preconditions of Theorem \ref{thm:super-theorem}, we have that 
\[ \frac{\partial(\Delta^k_b U(x,0))}{\partial y} = 0 \]
for $k < m$. 
\end{lem}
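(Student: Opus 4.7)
The plan is to reduce the claim to a Frobenius analysis of $U$ near $y=0$, using the two lemmas already established. By Lemma \ref{lem:induction-converse}, $U$ satisfies the second-order equation $\Delta U + \frac{1-2\gamma}{y} U_y = 0$. Since $b-a = (2m+1-2\gamma)-(1-2\gamma) = 2m$, rearranging the equation yields the compact identity $\Delta_b U = \frac{2m}{y} U_y$. Iterating Lemma \ref{lem:induction} shows that each $V_k := \Delta_b^k U$ satisfies the shifted equation $\Delta V_k + \frac{2k+1-2\gamma}{y} (V_k)_y = 0$, and the same rearrangement gives the recursion
\[ \Delta_b^{k+1} U = \frac{2(m-k)}{y} \partial_y(\Delta_b^k U), \]
which will carry all the structure we need from one step to the next.

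Next I would analyze $V_k$ via its Frobenius expansion near $y=0$. Substituting $y^s$ into the ODE for $V_k$ shows the indicial exponents are $s = 0$ and $s = 2(\gamma-k)$, both non-integer since $\gamma$ is non-integer. The general solution therefore decomposes into
\[ V_k(x,y) = \sum_{j\geq 0} A_{k,j}(x) y^{2j} \;+\; y^{2(\gamma-k)} \sum_{j\geq 0} B_{k,j}(x) y^{2j}, \]
a ``regular'' branch of even powers plus a ``singular'' branch whose non-integer exponent is shifted by $-2k$ from that of $U$. The even-derivative boundary conditions in Theorem \ref{thm:super-theorem} pin down the low-order coefficients $A_{0,j}$ of $U = V_0$, and the remaining coefficients are determined by the recurrence coming from the equation. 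A direct check shows that the operator $\tfrac{1}{y}\partial_y$ sends $y^{2j}$ to $2j\,y^{2j-2}$ and $y^{2(\gamma-k)+2j}$ to $(2(\gamma-k)+2j)\,y^{2(\gamma-k-1)+2j}$, so the Frobenius form is precisely propagated by the displayed identity above, and by induction each $V_k$ (for $k \leq m$) admits an expansion of the stated form.

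From this expansion the result is immediate. The regular branch contributes nothing to $\partial_y V_k(x,0)$, being an even power series in $y$. The singular branch contributes $2(\gamma-k) B_{k,0}(x)\, y^{2(\gamma-k)-1}\big|_{y=0}$, which vanishes whenever $2(\gamma-k)-1 > 0$. Since $k < m < \gamma$ and $m$ is a positive integer, $\gamma - k > 1$ and hence $2(\gamma-k)-1 > 1 > 0$, so this term also vanishes. Therefore $\partial_y \Delta_b^k U(x,0) = 0$ for all $k < m$, as required.

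The main obstacle will be the bookkeeping needed to verify that the boundary conditions of Theorem \ref{thm:super-theorem}, in particular the prescribed even-derivative values involving the factors $\prod_{j} (2\gamma - 4(j-1))^{-1}$, are exactly those of the regular Frobenius branch, and that the weighted Neumann-type conditions $\lim_{y\to 0} y^b \partial_y^{2j+1} U = 0$ are automatically preserved under the recursion and do not force any additional singular terms to appear when passing from $V_k$ to $V_{k+1}$. Once this compatibility is confirmed, the argument is essentially algebraic.
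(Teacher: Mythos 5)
Your argument is correct but takes a genuinely different route from the paper's. Both proofs start the same way: they reduce, via Lemma \ref{lem:induction-converse} and Lemma \ref{lem:induction}, to the fact that $W = \Delta_b^k U$ satisfies the shifted second-order equation $\Delta W + \frac{a+2k}{y}W_y = 0$. The paper then finishes in two lines with a barrier argument: $W(x,0)\pm Cy^2$ are local super- and sub-solutions for $C$ large enough (precisely because $\partial_{yy}(y^2)+\frac{a+2k}{y}\partial_y(y^2)=2(1+a+2k)=4(1-(\gamma-k))$ has a definite sign when $\gamma-k>1$, i.e.\ when $k<m$), and the comparison principle squeezes $\partial_y W(x,0)=0$. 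You instead perform a Frobenius analysis near $y=0$, identify the indicial roots $0$ and $2(\gamma-k)$, decompose $V_k$ into a regular even-power branch plus a singular branch, and read off the vanishing of $\partial_y V_k(x,0)$ from the fact that the singular exponent satisfies $2(\gamma-k)-1>1$ for $k<m$. Note that this is exactly the same threshold $\gamma-k>1$ that powers the paper's barrier. A minor slip: you call both indicial exponents non-integer, but $s=0$ is of course an integer; what you need (and have) is that their difference $2(\gamma-k)$ is non-integer, so the two Frobenius branches are genuinely independent with no logarithmic terms. The trade-off between the two approaches is as you anticipated: the barrier argument is shorter and does not require first justifying that $U$ admits a convergent Frobenius-type series at $y=0$ (a nontrivial regularity statement that your proof leaves as ``the main obstacle''), whereas your expansion gives finer structural information about $U$ near the boundary and ties directly into the scattering-theoretic series expansion from \cite{C-G} that motivated the even-derivative boundary conditions in Theorem \ref{thm:super-theorem}.
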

\begin{proof}
Let $W = \Delta^k_b U$. Then $W$ solves $\Delta^{m+1-k}_b W = 0$, and hence satisfies the equation 
\[ \Delta W + \frac{a+2k}{y} W_y = 0 \]
By the maximum principle, $W(x,0)+Cy^2$ is locally a supersolution for $C > 0$ sufficiently large, and $W(x,0) - Cy^2$ is a subsolution, again for $C>0$ sufficiently large. Hence, we can conclude that $\frac{\partial W}{\partial y} (x,0) = 0$. 
\end{proof}

As a consequence of our results, we have the following trace inequality:
\begin{cor}
For $U \in H^{m+1}(y^b, \mathbb{R}^{n+1}_+)$ satisfying the given boundary conditions, we have that the trace of $U$ on the boundary, which we call $f$, satisfies
\[ \|f\|_{H^\gamma(\mathbb{R}^n)} \leq C_\gamma \|y^b D^{m+1} U\|_{L^2(\mathbb{R}^{n+1}_+)} \]
where $D^{m+1}$ is either $\Delta_b^{\frac{m+1}{2}}$, or $\nabla \Delta_b^\frac{m}{2}$, whichever is appropriate. 
\end{cor}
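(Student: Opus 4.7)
The plan is to obtain this trace inequality as an essentially immediate consequence of the variational principle implicit in Theorem \ref{thm:super-theorem}. The key observation is that the Fourier computation in that proof does more than evaluate the energy of a specific extension: when one Plancherels in $x$, the functional $J(V)=\int_{\mathbb R^{n+1}_+} y^b |D^{m+1}V|^2\,dx\,dy$ decouples across frequencies, and on each fiber one recognizes a one-dimensional weighted Dirichlet-type energy whose Euler--Lagrange equation is exactly the ODE satisfied by $\phi(|\xi|y)$. Hence the extension built as $\hat U_{\min}(\xi,y)=\hat f(\xi)\phi(|\xi|y)$ is the minimizer of $J$ among all admissible competitors with trace $f$ and the prescribed odd/even derivative conditions at $y=0$.

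Granting this, the first step is to write, for any admissible $U$ with trace $f$,
\[ \int_{\mathbb R^{n+1}_+} y^b |D^{m+1} U|^2\,dx\,dy \;\ge\; \int_{\mathbb R^{n+1}_+} y^b |D^{m+1} U_{\min}|^2\,dx\,dy \;=\; C_{n,\gamma}\int_{\mathbb R^n}|\xi|^{2\gamma}|\hat f(\xi)|^2\,d\xi, \]
where the equality in the middle is exactly the content of Theorem \ref{thm:super-theorem}. Reading $\|f\|_{H^\gamma}$ as the homogeneous seminorm (which is what the scaling of the right-hand side forces), this is precisely the claimed inequality, with constant $C_\gamma=C_{n,\gamma}^{-1/2}$.

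The steps I would carry out, in order, are: (i) verify the one-dimensional minimization property on each frequency fiber, by checking that the quadratic form in the integrand of $J$ is coercive on the weighted Sobolev space cut out by the boundary conditions, and that $\phi$ realizes its unique critical point; (ii) integrate over $\xi$ and apply Plancherel to identify the minimum of $J$ on the extension class with $C_{n,\gamma}\|f\|_{\dot H^\gamma}^2$; (iii) conclude by the comparison above. Uniqueness of the minimizer (already implicit in the existence/uniqueness remarks at the beginning of the proof of Theorem \ref{general-theorem}) justifies strict comparison on each frequency fiber.

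The main obstacle I anticipate is not the variational argument itself but the preliminary trace theory: one must know that an arbitrary $U \in H^{m+1}(y^b,\mathbb R^{n+1}_+)$ with the stated boundary conditions actually has a well-defined boundary trace $f\in H^\gamma(\mathbb R^n)$, so that the comparison above is meaningful and the higher-order boundary conditions on $U$ can be imposed intrinsically. I would handle this by density, approximating $U$ in the weighted norm by smooth functions with compact support in $\overline{\mathbb R^{n+1}_+}$ for which the Fourier identities of Theorem \ref{thm:super-theorem} hold literally, proving the inequality for the approximants, and passing to the limit using lower semicontinuity of the $\dot H^\gamma$ seminorm and convergence of the weighted $L^2$ top-order norms.
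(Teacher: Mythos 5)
Your proposal is correct and follows essentially the same route as the paper, which simply invokes the minimality of the extension constructed in Theorem~\ref{thm:super-theorem}: the minimizer's energy equals $C_{n,\gamma}\int|\xi|^{2\gamma}|\hat f|^2\,d\xi$, so any competitor's energy dominates it. Your extra care in noting that scaling forces the homogeneous seminorm $\dot H^\gamma$ rather than the full $H^\gamma$ norm is a genuine (and correct) refinement of the statement, but the underlying argument is the same.
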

The proof follows from the energy argument of Theorem \ref{thm:super-theorem}, since that theorem uses energy minimizers. 

\section{Strong unique continuation}\label{sec:monotonicity}
We show an analogue of Almgren's frequency formula holds for the fractional Laplacian, which allows us to deduce strong unique continuation in the style of Garofalo and Lin \cite{Garofalo-Lin}. 

We begin by showing that such a formula holds for balls centered on the interior of a domain where the equation $\Delta_b^{m+1} U = 0$ holds. The calculations along the boundary are not so very different, and the boundary terms are absorbed 
Let $U_k = (\Delta_b)^k U$, and we consider the behavior of certain integrals of $U$ with respect to balls centered at a particular point. Let
\[ D(r) = \sum_{k=0}^m \int_{B_r} y^b \left(|\nabla U_k|^2  + U_k U_{k+1}\right) dx dy\]
and
\[ H(r) = \sum_{k=0}^m \int_{\partial B_r} y^b (U_k)^2 dS. \]
Then we let 
\[ N(r) = r \frac{D(r)}{H(r)} \]
and we get the following result
\begin{thm}\label{thm:monotonicity-interior}
There exists a number $\Lambda(m,b,n) > 0$ such that if $(\Delta_b)^{m+1} U = 0$, then $e^{\Lambda r} N(r)$ is uniformly bounded for $r<1$. 
\end{thm}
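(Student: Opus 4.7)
The plan is to follow the Garofalo--Lin frequency strategy: establish differential identities for $D(r)$ and $H(r)$ separately, use Cauchy--Schwarz to relate them, and conclude that $\frac{d}{dr}\log N(r) \geq -\Lambda$. Integrating then gives that $e^{\Lambda r} N(r)$ is monotone non-decreasing, hence bounded above by its value at $r=1$ for all $r<1$.

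First, I would rewrite $D(r)$ as a boundary integral. The weighted operator has the divergence form $y^b \Delta_b V = \mathrm{div}(y^b \nabla V)$, so integration by parts on $B_r$ (using $\Delta_b U_k = U_{k+1}$) gives
\[ \int_{B_r} y^b(|\nabla U_k|^2 + U_k U_{k+1})\,dx\,dy = \int_{\partial B_r} y^b U_k \frac{\partial U_k}{\partial \nu}\,dS, \]
so that $D(r) = \sum_{k=0}^m \int_{\partial B_r} y^b U_k \,\partial_\nu U_k\, dS$. This form is easier to differentiate. Next I differentiate $H(r)$ by pulling back to the unit sphere: a short computation yields
\[ H'(r) = \frac{n}{r} H(r) + 2 D(r) + E_1(r), \]
where $E_1(r) = \int_{\partial B_r} \partial_\nu(y^b) \sum_k U_k^2\, dS$ comes from differentiating the weight. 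In the interior setting the ball stays away from $\{y=0\}$, so $y \geq y_{\min} > 0$ on $B_r$ and $|E_1(r)| \leq C(m,b,n,y_{\min}) H(r)$.

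For $D'(r)$ I would prove a Rellich--Pohozaev identity adapted to the cascade $\Delta_b U_k = U_{k+1}$: multiply the equation by $y^b(x-x_0)\cdot \nabla U_k$ and integrate over $B_r$, for each $k=0,\ldots,m$. Summing over $k$ and rearranging (using $U_{m+1} = 0$) should produce
\[ D'(r) = \frac{n-1}{r} D(r) + 2 \sum_{k=0}^m \int_{\partial B_r} y^b (\partial_\nu U_k)^2 dS + E_2(r), \]
where $E_2(r)$ collects lower-order terms coming from the cross products $(x-x_0)\cdot\nabla U_k \cdot U_{k+1}$ and from derivatives of the weight $y^b$, each of which is bounded by $C(D(r)+H(r))$ in the interior. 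Finally, Cauchy--Schwarz applied first in $k$ and then as a sum gives
\[ D(r)^2 \leq H(r) \cdot \sum_{k=0}^m \int_{\partial B_r} y^b (\partial_\nu U_k)^2 dS. \]
Combining the last inequality with the expressions for $H'$ and $D'$, the standard Almgren algebra applied to $\frac{d}{dr}\log N(r) = \frac{1}{r} + \frac{D'(r)}{D(r)} - \frac{H'(r)}{H(r)}$ produces
\[ \frac{d}{dr}\log N(r) \geq -\Lambda(m,b,n), \]
which integrates to the claimed bound.

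The main obstacle will be the Rellich identity in the $D'(r)$ computation. Unlike the classical Almgren setting, the $U_k$ satisfy a coupled cascade rather than a single second-order equation, so the multiplier $(x-x_0)\cdot \nabla U_k$ generates cross terms involving $U_{k+1}\,(x-x_0)\cdot \nabla U_k$ that do not individually behave nicely. One has to show, via further integrations by parts, that these cross terms either telescope across the sum over $k$ (exploiting $U_{m+1} = 0$) or are absorbed into the $E_2$ remainder. In parallel, the weight $y^b$ generates extra volume contributions of the form $y^{b-1}\partial_y U_k$ which, while harmless away from $\{y=0\}$, must be carefully accounted for in order to extract the clean coefficient $\tfrac{n-1}{r}$ and to ensure that the constant $\Lambda$ really depends only on $m$, $b$, and $n$.
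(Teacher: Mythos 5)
Your strategy matches the paper's (differentiate $\log N$, Rellich--Pohozaev identity for $D'$, Cauchy--Schwarz to make the leading correction nonnegative), but the proposal has three genuine gaps, two of which are substantive.

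First, the treatment of the weight. The paper's Rellich identity (Lemma~\ref{lem:rellich}) and the computation of $H'(r)$ both center the dilation field $X$ at a point of $\{y=0\}$, so that $X\cdot\nabla(y^b)=b\,y^b$ and $\nabla\cdot(y^bX)=(n+1+b)y^b$. This makes the weight derivative produce an \emph{exact} term $\tfrac{b}{r}H(r)$, which combines with the Euclidean $\tfrac{n}{r}$ to give $H'(r)=\tfrac{n+b}{r}H(r)+2D(r)$, and similarly yields the clean $\tfrac{n-1+b}{r}$ in $D'$. Your alternative of writing $\tfrac{n}{r}H(r)+E_1(r)$ with $|E_1|\leq C(y_{\min})H(r)$ forfeits this exactness: the constant depends on how far the center lies above $\{y=0\}$, which contradicts the claimed dependence $\Lambda(m,b,n)$, and fails entirely if the ball meets $\{y=0\}$. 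You need the scaling-invariant identity, not a crude error bound.

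Second, the assertion that $E_2(r)=O(D(r)+H(r))$ is not proved, and it is not achievable by ``further integrations by parts.'' The dangerous cross terms have the form $\tfrac{1}{r}\int_{B_r}y^bU_kU_{k+1}\,dx\,dy$ and $\tfrac{1}{r}\int_{B_r}y^bU_{k+1}\,(X\cdot\nabla U_k)\,dx\,dy$; these are \emph{interior} volume integrals, not boundary integrals, so they are not directly comparable to $H(r)$. The paper controls them with a genuine weighted Poincar\'e-type estimate (Lemma~\ref{lem:interior-exterior}), which bounds $\int_{B_r}y^bW^2$ by $r\int_{\partial B_r}y^bW^2$ plus a term in $\Delta_bW$, and then iterates this down the cascade $U_k\to U_{k+1}\to\cdots\to U_{m+1}=0$ to close the estimate. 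This elliptic lemma is the real technical heart of the argument and is absent from your proposal.

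Third (a smaller point): after bounding $E_2$ by $C(D(r)+H(r))$, one must divide by $D(r)$; to conclude the quotient is $O(1)$ you need $H(r)/D(r)$ bounded, which holds only on the set where $N(r)>1$. The paper explicitly restricts to this set (the bound is trivial elsewhere); your write-up should state this reduction.
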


In the proof, an important calculational tool is the following modification of an identity originally due to Rellich \cite{Rellich}, which appeared in \cite{C-S}. 
\begin{lem}\label{lem:rellich}
Suppose $\Delta_b W(x,y) = V(x,y)$. Then 
\[ r \int_{\partial B_r} y^b (|\nabla W|^2 - 2 W_r^2) dS = (n-1+b) \int_{B_r} y^b (\nabla W)^2 dx dy - 2 \int_{B_r} y^b (X \cdot \nabla W) V dx dy \]
\end{lem}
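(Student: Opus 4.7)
The plan is to adapt the classical Rellich identity to the degenerate operator $\Delta_b$ by the standard device of multiplying the equation by the radial test function $X \cdot \nabla W$ and integrating by parts twice. The key observation that makes the manipulation go through in the weighted setting is that $\Delta_b W = V$ is equivalent to the divergence-form identity $\mathrm{div}(y^b \nabla W) = y^b V$, because $y^b \Delta W + b y^{b-1} W_y = y^b \Delta_b W$.

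First I multiply this divergence form of the equation by $X \cdot \nabla W$ and integrate over $B_r$. One integration by parts yields a boundary contribution $\int_{\partial B_r} (X \cdot \nabla W)(y^b \nabla W \cdot \nu)\, dS$, which collapses to $r \int_{\partial B_r} y^b W_r^2 \, dS$ since $\nu = X/r$ and thus $X \cdot \nabla W = r W_r$ on $\partial B_r$, together with an interior piece $-\int_{B_r} y^b \nabla(X \cdot \nabla W) \cdot \nabla W \, dx\, dy$. Next I expand $\nabla(X \cdot \nabla W) \cdot \nabla W = |\nabla W|^2 + \tfrac{1}{2} X \cdot \nabla |\nabla W|^2$ by a direct coordinate computation, and integrate the second term against $y^b$ by parts a second time, using the weighted divergence identity $\mathrm{div}(y^b X) = (n+1+b) y^b$ (valid because $\mathrm{div}\, X = n+1$ in $\mathbb{R}^{n+1}$ and $X \cdot \nabla y^b = b y^b$). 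This produces a boundary term $\tfrac{r}{2}\int_{\partial B_r} y^b |\nabla W|^2 \, dS$ and an interior term $-\tfrac{n+1+b}{2}\int_{B_r} y^b |\nabla W|^2 \, dx\, dy$. Collecting the two boundary contributions and the interior pieces, then multiplying through by $-2$, gives the claimed identity on the nose.

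The only mildly delicate point is justifying the two integrations by parts when $B_r$ happens to touch the hyperplane $\{y=0\}$, where $y^b$ is singular. For interior balls, which is what Theorem \ref{thm:monotonicity-interior} requires, this issue does not arise and the computation above is immediate; this is why I expect the main work of the proof to be purely algebraic rather than analytic. When the ball does meet the boundary, I would truncate to $\{y > \epsilon\} \cap B_r$, carry out exactly the same manipulation on the truncated domain, and pass to the limit $\epsilon \to 0$; the artificial boundary contributions along $\{y = \epsilon\}$ carry a factor of $\epsilon^b$ and vanish under the boundary conditions imposed in Theorem \ref{thm:super-theorem}, as is noted in the discussion immediately following Lemma \ref{lem:tricky}. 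This limiting step is the only real obstacle beyond bookkeeping.
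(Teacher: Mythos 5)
Your proof is correct, and the algebra checks out: the two integrations by parts produce exactly $r\int_{\partial B_r} y^b W_r^2\,dS$, $\tfrac{r}{2}\int_{\partial B_r} y^b|\nabla W|^2\,dS$, and the interior factor $\tfrac{n-1+b}{2}$, which after multiplying by $-2$ give the stated identity. This is essentially the same argument as the paper's: the paper applies the divergence theorem directly to the vector field $y^b\tfrac{|\nabla W|^2}{2}X - y^b(X\cdot\nabla W)\nabla W$, which is precisely the field whose flux you would obtain by compressing your two integration-by-parts steps into a single divergence-theorem application, so the two presentations are the same calculation, merely unpacked in your version and packaged in the paper's.
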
  
\begin{proof}[Proof of Lemma \ref{lem:rellich}]
We apply the divergence theorem to the following term.
\[ \nabla \cdot \left( y^b \frac{|\nabla W|^2}{2} X - y^b (X \cdot \nabla W) \nabla W\right)\]
The boundary term is precisely the desired left hand side, and after some calculation we find that the right hand side matches up with the given term as well. 
\end{proof}

Another extremely useful estimate is the following calculation, which allows us to bound the behavior of an integral in the interior of the ball in terms of its behavior on the boundary. 
\begin{lem}\label{lem:interior-exterior}
Suppose $\Delta_b W(x,y) = V(x,y)$. Then 
\[ \int_{B_r} y^b W^2 dx dy \leq C \left(r \int_{\partial B_r} y^b W^2 dS + \int_{B_r} V^2 dx dy \right) \]
\end{lem}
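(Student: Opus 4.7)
The plan is to decompose $W$ into a $\Delta_b$-harmonic part and an inhomogeneous part with zero boundary data, and estimate each separately. Write $W = W_0 + W_1$, where $W_0$ solves $\Delta_b W_0 = 0$ in $B_r$ with $W_0 = W$ on $\partial B_r$, and $W_1 = W - W_0$ solves $\Delta_b W_1 = V$ with $W_1 \equiv 0$ on $\partial B_r$. Since $B_r$ is assumed to lie in the interior (so $y$ is bounded away from $0$ on $\overline{B_r}$), $\Delta_b$ is uniformly elliptic with smooth coefficients there, so both subproblems are classically solvable, and by $(a+b)^2 \leq 2a^2 + 2b^2$ it suffices to estimate the two pieces separately.

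For the inhomogeneous piece $W_1$, I use the identity $\nabla \cdot (y^b \nabla W_1) = y^b V$, multiply by $W_1$, and integrate over $B_r$; the boundary term vanishes since $W_1 = 0$ on $\partial B_r$, yielding $\int_{B_r} y^b |\nabla W_1|^2\,dx\,dy = -\int_{B_r} y^b W_1 V\,dx\,dy$. Combining this with the weighted Poincar\'e inequality $\int y^b W_1^2 \leq C r^2 \int y^b |\nabla W_1|^2$ (valid because $W_1$ vanishes on $\partial B_r$) and applying Cauchy--Schwarz with absorption produces $\int_{B_r} y^b W_1^2 \leq C r^4 \int_{B_r} V^2$, after absorbing the bounded factor $y^b$ into the constant.

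For the $\Delta_b$-harmonic piece $W_0$, I apply the divergence theorem to the vector field $y^b W_0^2 X$, which gives
\[
(n+1+b) \int_{B_r} y^b W_0^2\,dx\,dy = r \int_{\partial B_r} y^b W_0^2\,dS - 2 \int_{B_r} y^b W_0 (X \cdot \nabla W_0)\,dx\,dy.
\]
Young's inequality bounds the cross-term at the cost of $\int_{B_r} y^b |\nabla W_0|^2$, which I control by $\int y^b |\nabla W_0|^2 = \int_{\partial B_r} y^b W_0 (W_0)_\nu$ (from $\Delta_b W_0 = 0$) combined with the Dirichlet-to-Neumann bound $\|(W_0)_\nu\|_{H^{-1/2}(\partial B_r)} \leq C \|W_0\|_{H^{1/2}(\partial B_r)}$ for the non-degenerate operator $\Delta_b$. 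This delivers $\int_{B_r} y^b W_0^2 \leq Cr \int_{\partial B_r} y^b W_0^2$, and since $W_0 = W$ on $\partial B_r$, recombining via the triangle inequality gives the lemma.

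The main obstacle is the harmonic-piece estimate: without a source term, no Poincar\'e argument is available, so the bound must be driven entirely by the boundary data. The cleanest way to obtain it is to apply Lemma \ref{lem:rellich} with $V = 0$ to reduce $\int_{B_r} y^b |\nabla W_0|^2$ to a boundary integral of $|\nabla W_0|^2$, then invoke a standard trace inequality to bound this by $\int_{\partial B_r} y^b W_0^2$; keeping track of the explicit scaling in $r$ accounts for the factor of $r$ in the final estimate.
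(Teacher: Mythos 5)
Your treatment of the inhomogeneous piece $W_1$ is fine, but the harmonic piece $W_0$ has a genuine gap, and neither of the two routes you sketch for it closes. The Dirichlet-to-Neumann bound gives $\int_{B_r} y^b |\nabla W_0|^2 = \int_{\partial B_r} y^b W_0 (W_0)_\nu \leq C \|W_0\|_{H^{1/2}(\partial B_r)}^2$, which is a strictly stronger norm than $\|W_0\|_{L^2(\partial B_r)}^2$ and cannot be controlled by it; indeed, for a degree-$k$ spherical harmonic $W_0 = \rho^k Y_k$ one has $\int_{B_r} |\nabla W_0|^2 \sim k\, r^{-1} \int_{\partial B_r} W_0^2$, so the interior Dirichlet energy is \emph{not} bounded by the boundary $L^2$ data uniformly in the frequency of $W_0$. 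The fallback using Lemma \ref{lem:rellich} fares no better: with $V=0$ it reduces the interior Dirichlet energy to the boundary Dirichlet energy $\int_{\partial B_r} y^b |\nabla W_0|^2$, and a ``standard trace inequality'' only moves you from a ball to its boundary, not from $|\nabla W_0|^2$ down to $W_0^2$; the same spherical harmonic example shows that $\int_{\partial B_r} |\nabla W_0|^2 \sim k^2 r^{-2} \int_{\partial B_r} W_0^2$, so no frequency-independent constant exists. Thus the step ``$\int_{B_r} y^b W_0^2 \leq C r \int_{\partial B_r} y^b W_0^2$'' remains unjustified in your writeup, even though the inequality itself is true for $\Delta_b$-harmonic functions.

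The paper avoids the decomposition entirely and proves the lemma for general $W$ in one pass by testing the identity $\Delta_b(W^2) = 2|\nabla W|^2 + 2WV$ against the multiplier $(r^2 - |X|^2)$, which vanishes on $\partial B_r$ and is nonnegative in $B_r$. Two integrations by parts give
\[
(n+1+b)\int_{B_r} y^b W^2\,dx\,dy + \int_{B_r} y^b |\nabla W|^2 (r^2-|X|^2)\,dx\,dy = r\int_{\partial B_r} y^b W^2\,dS - \int_{B_r} y^b W V (r^2 - |X|^2)\,dx\,dy,
\]
and since the $|\nabla W|^2$ term on the left has a favorable sign it can simply be discarded; Cauchy's inequality on the $WV$ term then yields the statement. (The published version of this identity drops the $(r^2-|X|^2)$ factor on two terms, which is a typo, but the argument goes through with it restored.) The crucial point your proposal misses is exactly this sign structure coming from the $(r^2-|X|^2)$ multiplier: it is what lets one dispense with $\int y^b |\nabla W|^2$ altogether, without ever needing to bound it by boundary $L^2$ data. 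If you want to retain your decomposition, the cleanest fix is to apply the same multiplier identity to $W_0$ with $V=0$, which directly gives $(n+1+b)\int_{B_r} y^b W_0^2 \leq r\int_{\partial B_r} y^b W_0^2$; but at that point the decomposition has become unnecessary.
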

\begin{proof}[Proof of Lemma \ref{lem:interior-exterior}]
Notice that $\Delta_b W^2 = 2|\nabla W|^2 + 2W \Delta_b W = 2|\nabla W|^2 + 2W V$. 
We consider the integral 
\begin{eqnarray*}
 \int_{B_r} y^b \Delta_b (W^2) \left(r^2 - |X|^2\right) dx dy &=& \int_{\partial B_r} y^b \nabla (W^2) \cdot \nu (r^2 - |X|^2) dS - \int_{B_r} y^b 2 W \nabla W \cdot (- 2 X) dx dy \\
&=& 4 \int_{B_r} y^b W (\nabla W \cdot X) dx dy \\
&=& 4 \int_{\partial B_r} y^b W^2 r dS - \int_{B_r} 4 W \nabla \cdot \left(y^b W X\right) dx dy \\
&=& 4 r \int_{\partial B_r} y^b W^2 dS - 4 \int_{B_r} y^b W \left[ (n+1 + b) W +  (\nabla W\cdot X) \right] dx dy \\
\end{eqnarray*}
Taking the last line of this calculation and comparing it with the second, we see that
\[ \int_{B_r} y^b W (\nabla W \cdot X) dx dy = \frac{1}{2} \left[ r \int_{\partial B_r} y^b W^2 dS - (n+1+b) \int_{B_r} y^b W^2 dx dy \right] \]
and hence that 
\[ \int_{B_r} y^b |\nabla W|^2 dx dy + (n+1+b) \int_{B_r} y^b W^2 dx dy  = r\int_{\partial B_r} y^b W^2 dS - \int_{B_r} y^b W V dx dy \]
Applying Cauchy's inequality, we get the desired result. 
\end{proof}

With these two estimates in hand, we can proceed to the proof. The general idea of the proof is the following: the map $r \mapsto N(r)$ is continuous in $r$, and so we can divide it into the set where $N(r)$ is large, and where it is small. We are only interested in proving our estimate when $N(r)$ is large, for the rest of the time it is already known to be small. 

\begin{proof}[Proof of Theorem \ref{thm:monotonicity-interior}]
We confine ourselves to the study of the set where $N(r) > 1$. 

We consider the expression $\log N(r)$, and take its derivative, whence we get
\[ \frac{1}{r} + \frac{D'(r)}{D(r)} - \frac{H'(r)}{H(r)} \]
Calculating and using the lemmata above, we find that 
\[ D'(r) = \left[\sum \int_{\partial B_r} y^b \left((\nabla U_k)^2 + U_k U_{k+1}\right) dS\right] \]
We examine first the gradient square term. 
\begin{eqnarray*}
\int_{\partial B_r} y^b (\nabla U_k)^2 dS &=& \frac{n-1+b}{r} \int_{B_r} y^b (\nabla U_k)^2 dx dy + 2 \int_{\partial B_r} y^b \left|\frac{\partial U_k}{\partial r}\right|^2 dS \\
&&- \frac{2}{r} \int_{B_r} y^b (X \cdot \nabla U_k) U_{k+1} dx dy\\
&=& \frac{n-1+b}{r} \int_{B_r} y^b (\nabla U_k)^2 dx dy  + 2 \int_{\partial B_r} y^b \left|\frac{\partial U_k}{\partial r}\right|^2 dS  \\
&&- 2 \int_{\partial B_r} y^b U_k U_{k+1} dS + 2\frac{n+1+b}{r} \int_{B_r} y^b U_k U_{k+1} dx dy\\ 
&& + \frac{2}{r} \int_{B_r} y^b U_{k+1} X \cdot \nabla U_k dx dy
\end{eqnarray*} 
Hence if we write
\[ D_k(r) = \int_{B_r} y^b \left(|\nabla U_k|^2 + U_k U_{k+1}\right) dy dx \]
then we have 
\begin{eqnarray*}
D_k'(r) &=& \frac{n-1+b}{r} D_k(r) + 2 \int_{\partial B_r} y^b \left|\frac{\partial U_k}{\partial r}\right|^2 dS - 2 \int_{\partial B_r} y^b U_k U_{k+1} dS\\
&& + \frac{n+3+b}{r} \int_{B_r} y^b U_k U_{k+1} dx dy + \frac{2}{r} \int_{B_r} y^b U_{k+1} (X \cdot \nabla U_k) dx dy 
\end{eqnarray*}

We now show that the last 3 terms of this expression are bounded in terms of $H(r)$. The first term is a straightforward application of elementary inequalities:
\[ \left|\int_{\partial B_r} y^b U_k U_{k+1} dS\right| \leq  \int_{\partial B_r} y^b \left(| U_k |^2  + |U_{k+1}|^2\right) dS \]
When we sum over all values of $k$, this is clearly bounded by $2 H(r)$. 

The second term is bounded in a similar fashion. First, an elementary inequality:
\[ \frac{1}{r} \left|\int_{B_r} y^b U_k U_{k+1} dx dy \right| \leq \frac{1}{r} \int_{B_r} y^b \left(|U_k|^2 + |U_{k+1}|^2\right) dx dy, \]
followed by iterated use of Lemma \ref{lem:interior-exterior}:
\[ \frac{1}{r} \int_{B_r} y^b U_k^2 dx dy \leq  C\left( \int_{\partial B_r} y^b U_k^2 dS + \int_{B_r} y^b U_{k+1}^2 dx dy \right) \leq \cdots \leq C H(r) \]

The third term we control in terms of $D_k(r)$ and $H(r)$.
\[ \frac{1}{r} \int_{B_r} y^b U_{k+1} (X \cdot \nabla U_k) dx dy \leq \frac{1}{r} \int_{B_r} y^b U_{k+1}^2 dx dy + \int_{B_r} y^b |\nabla U_k|^2 dx dy \]
Then by iterated use of Lemma \ref{lem:interior-exterior} we can bound $\frac{1}{r} \int_{B_r} y^b U_{k+1}^2 dx dy$ by $H(r)$, while the sum over $k$ of $\int_{B_r} y^b |\nabla U_k|^2 dx dy$ is clearly bounded by $D(r) + H(r)$. 

Hence, we conclude that 
\[ \frac{D'(r)}{D(r)} = \frac{(n-1+b)}{r} + 2 \frac{\sum_k \int_{\partial B_r} y^b \left| \frac{\partial U_k}{\partial r}\right|^2 dS}{D(r)} + 
\frac{1}{D(r)} \left( O(H(r)) + O(D(r)) \right) \]
Now, $O(D(r))/D(r)$ is bounded, and in the range we consider, where $r< 1$ and $N(r) > 1$, so is $\frac{H(r)}{D(r)}$. 

We now turn our examination to the study of $\frac{H'(r)}{H(r)}$. A straightforward calculation yields that 
\[ H'(r) = \frac{n+b}{r} H(r) + 2 \int_{\partial B_r} y^b \sum_k U_k \frac{\partial U_k}{\partial r} dS \]
We note that $D_k(r) = \int_{\partial B_r} y^b U_k \frac{\partial U_k}{\partial r} dS$ through a simple integration by parts. 

Hence, in the range where $r< 1$, $N(r) > 1$, we can estimate that
\begin{eqnarray*}
 \log (N(r))' &=& \frac{1}{r} + \frac{D'(r)}{D(r)} - \frac{H'(r)}{H(r)} \\
&=& O(1) +  2 \left( \frac{\sum_k \int_{\partial B_r} y^b \left(\frac{\partial U_k}{\partial r}\right)^2 dS}{D(r) } - \frac{D(r)}{ \sum_k \int_{\partial B_r} y^b (U_k)^2 dS }\right)
\end{eqnarray*}
The second term is non-negative by Holder's inequality, and hence
\[ \log(N(r))' \geq -C \]
for some suitable constant $C$, which establishes the theorem. 
\end{proof}

Every step of the proofs contained above also apply along the (suitably smooth) boundary of a domain where $U$ solves $\Delta_b^{m+1} U = 0$, provided the all the appropriate normal derivatives are 0, and hence, if we define, about points $(x,0) \in \mathbb{R}^{n+1}_+$,
\[ D(r) = \sum_{k=0}^m \int_{B_r \cap \{y>0\} } y^b\left( |\nabla U_k|^2  + U_k U_{k+1} \right) dx dy\]
and
\[ H(r) = \sum_{k=0}^m \int_{\partial B_r \cap \{y> 0\}} y^b (U_k)^2 dS \].
and
\[ N(r) = r \frac{D(r)}{H(r)} \]
we get the following analogous result: 
\begin{thm}\label{thm:monotonicity-fractional}
The result of Theorem \ref{thm:monotonicity-interior} still holds if we consider functions $U$ which solve $\Delta_b^{m+1} U = 0$ in the upper half space with $\frac{\partial}{\partial y} \Delta_b^k U = 0$ for $0 \leq k \leq m$.
\end{thm}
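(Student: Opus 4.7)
The plan is to repeat the argument of Theorem \ref{thm:monotonicity-interior} verbatim, replacing the ball $B_r$ by the half-ball $B_r^+ = B_r \cap \{y>0\}$ and $\partial B_r$ by the spherical cap $\partial B_r \cap \{y>0\}$ throughout. The only new feature is that whenever the divergence theorem is invoked on $B_r^+$, one picks up an additional boundary integral over the flat piece $B_r \cap \{y = 0\}$. I claim that the hypotheses $\partial_y \Delta_b^k U = 0$ at $y=0$ for $0 \le k \le m$ are precisely what is needed to make every such flat-boundary contribution vanish.

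First I re-derive the Rellich identity of Lemma \ref{lem:rellich} on the half-ball. Applying the divergence theorem to $\tfrac{1}{2} y^b |\nabla W|^2 X - y^b (X\cdot\nabla W)\nabla W$ on $B_r^+$ with outward normal $-e_{n+1}$ on the flat piece produces the extra term
\begin{equation*}
\int_{B_r \cap \{y=0\}} \left[ -\tfrac{1}{2} y^{b+1} |\nabla W|^2 + y^{b}\, (x \cdot \nabla_x W)\, W_y \right] dx.
\end{equation*}
Since $b = 2m+1-2\gamma \in (-1,1)$, the factor $y^{b+1}$ vanishes as $y \to 0$. Taking $W = U_k = \Delta_b^k U$, the hypothesis $W_y(x,0) = 0$ together with smoothness in the interior gives $W_y = O(y)$ near the boundary, so $y^b W_y = O(y^{b+1}) \to 0$, killing the second piece. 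An identical argument removes the flat contribution in the analogue of Lemma \ref{lem:interior-exterior}.

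Second, I re-run each integration-by-parts step from the interior proof on $B_r^+$. Every identity that the interior argument relied on, such as $D_k(r) = \int_{\partial B_r} y^b U_k (\partial_r U_k)\, dS$, now acquires a flat-boundary remainder of the form $\int_{B_r \cap \{y=0\}} y^{b} U_k\, (\Delta_b^j U)_y \, dx$ for some $j \le m$, and each of these vanishes for exactly the same reason: $y^b (\Delta_b^j U)_y = O(y^{b+1})$. With all new boundary remainders suppressed, the expressions for $D'(r)$ and $H'(r)$ take the same form as in the interior case, the cross term $2\int_{\partial B_r^+} y^b U_k U_{k+1}\, dS$ and the interior cross terms are estimated in terms of $H(r)$ and $D(r)$ exactly as before, and the Cauchy--Schwarz / Hölder step that gave the non-negativity of the last bracket in $(\log N(r))' = 1/r + D'/D - H'/H$ applies unchanged, yielding $(\log N(r))' \ge -C$ on $\{N(r) > 1,\, r < 1\}$.

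The only real obstacle is bookkeeping: every divergence-theorem step in the interior proof must be re-examined on $B_r^+$ and the resulting $\{y=0\}$ remainder sorted into one of two types, namely (a) a strictly positive power of $y$ multiplied by a quantity bounded up to the boundary, or (b) a factor of $y^b$ against a normal derivative $(\Delta_b^k U)_y$ that vanishes at $y=0$ by hypothesis. No new monotonicity or integration-by-parts idea is required; the virtue of the Neumann-type boundary conditions $\partial_y \Delta_b^k U = 0$ is exactly that they are engineered so each such remainder disappears, reducing Theorem \ref{thm:monotonicity-fractional} to the interior statement already proved.
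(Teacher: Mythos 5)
Your proposal matches the paper's approach: the paper itself gives essentially no proof, only the remark preceding the statement that every step of the interior argument carries over on half-balls once the flat-boundary terms vanish under the Neumann conditions $\partial_y \Delta_b^k U|_{y=0}=0$. You have simply made explicit the bookkeeping the paper leaves implicit, correctly identifying that each $\{y=0\}$ remainder either carries a factor $y^{b+1}$ with $b>-1$ or a factor $y^b(\Delta_b^k U)_y$ that vanishes by hypothesis.
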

Following the proof of Garofalo and Lin \cite{Garofalo-Lin} (pp 256-257), that such an integral is bounded, along with our extension result, suffices to provide a proof of \textit{strong unique continuation}, that is, 
\begin{cor}
Let $f(x)$ be a function on $\mathbb{R}^n$ such that 
\[ (-\Delta)^\gamma f = 0 \]
in some smooth domain $\Omega \subset \mathbb{R}^n$, with $f(x)$ defined suitably on $\mathbb{R}^n \setminus \Omega$, so that $f \in H^\gamma(\mathbb{R}^n)$. Then if $f$ vanishes of infinite order at some point $x_0 \in \Omega$, that is to say for sufficiently small $r$, we have 
\[ \int_{B_r(x_0)} |f(x)| dx = O(r^k) \]
for every $k \in \mathbb{N}$, then $f$ is uniformly 0 in $\Omega$. 
\end{cor}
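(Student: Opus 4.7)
The plan is to reduce the fractional unique continuation question to an Almgren-type monotonicity argument for the weighted polyharmonic equation $\Delta_b^{m+1} U = 0$, using the extension of Theorem \ref{thm:super-theorem} together with the monotonicity provided by Theorem \ref{thm:monotonicity-fractional}. First, I would lift $f$ to its minimizing extension $U$ on $\mathbb{R}^{n+1}_+$, so that $\Delta_b^{m+1} U = 0$ with $U(\cdot, 0) = f$ and with the Neumann-type conditions prescribed in Theorem \ref{thm:super-theorem}. The assumption $(-\Delta)^\gamma f \equiv 0$ on $\Omega$ gives the additional boundary condition $\lim_{y \to 0} y^b \partial_y \Delta_b^m U = 0$ over $\Omega$; together with Lemma \ref{lem:tricky}, this yields $\partial_y \Delta_b^k U(x,0) = 0$ for every $0 \le k \le m$ and every $x \in \Omega$, which are exactly the boundary conditions under which Theorem \ref{thm:monotonicity-fractional} applies at base points $(x_0, 0)$ with $x_0 \in \Omega$.

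Second, I would propagate the hypothesized infinite-order vanishing of $f$ at $x_0$ to infinite-order vanishing of $U$, and in fact of each $U_k := \Delta_b^k U$ for $0 \le k \le m$, at the base point $(x_0, 0)$ in the weighted $L^2$ sense. The main input here is the scattering-type equation of Lemma \ref{lem:induction} together with the boundary $C^{1,\alpha}$ regularity from Lin--Wang invoked in Lemma \ref{lem:induction-converse}; these let one pull boundary decay of $f$ up to decay of $U$ in the half-space, and then iterate the argument on the cascade $U_0 \to U_1 \to \cdots \to U_m$ provided by Lemma \ref{lem:induction}. The output is $H(r) = O(r^N)$ for every $N$, with $H$ the quantity from Theorem \ref{thm:monotonicity-fractional}.

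Third, Theorem \ref{thm:monotonicity-fractional} yields a uniform bound $N(r) \le N_0$ for small $r$. From the intermediate identity $(\log H(r))' = (n+b)/r + 2 N(r)/r$ derived in the proof of that theorem, integration produces the doubling estimate $H(r_0/2^k) \ge C^{-k} H(r_0)$, which is incompatible with $H(r) = O(r^N)$ for all $N$ unless $H \equiv 0$ near $r = 0$. Consequently $U \equiv 0$ on a small half-ball around $(x_0,0)$. Classical unique continuation for the non-degenerate elliptic equation $\Delta U + \tfrac{1 - 2\gamma}{y} U_y = 0$, which $U$ satisfies by Lemma \ref{lem:induction-converse} with smooth coefficients in $\{y > 0\}$, then propagates $U \equiv 0$ throughout the upper half-space, so the trace $f = U(\cdot, 0) \equiv 0$, in particular on $\Omega$.

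The hardest part will be step two: translating the pointwise integral vanishing of $f$ at $x_0$ into weighted $L^2$ vanishing of the entire family $\{U_k\}_{k=0}^m$ through the degenerate weight $y^b$ requires iterated weighted boundary estimates of Schauder type, beyond what a single application of Lin--Wang delivers, and is where the technical heart of the proof lies.
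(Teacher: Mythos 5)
Your outline follows the route the paper intends: the paper's own proof of this corollary is a two-sentence pointer to Garofalo--Lin \cite{Garofalo-Lin} together with Theorem \ref{thm:monotonicity-fractional}, and your three steps are precisely what that pointer unpacks to. Your step 3 in particular is right: the identity $(\log H)'(r) = (n+b)/r + 2N(r)/r$ follows from $H'(r) = \frac{n+b}{r}H(r) + 2D(r)$ and $D(r) = \sum_k \int_{\partial B_r} y^b U_k \partial_r U_k\, dS$ established in the proof of Theorem \ref{thm:monotonicity-interior}; a frequency bound $N \le N_0$ then gives $H(r_0 2^{-k}) \ge C^{-k}H(r_0)$, which is incompatible with $H(r) = O(r^N)$ for all $N$ unless $H$ vanishes near $0$, and interior unique continuation for $\Delta U + \frac{1-2\gamma}{y}U_y = 0$ (non-degenerate and smooth in $\{y > 0\}$, via Lemma \ref{lem:induction-converse}) propagates $U \equiv 0$ from the half-ball.

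The genuine gap is exactly the one you flag, your step 2: passing from $\int_{B_r(x_0)}|f|\,dx = O(r^k)$ for every $k$ (an $n$-dimensional trace statement) to $H(r) = O(r^N)$ for every $N$ (an $(n+1)$-dimensional solid statement about $U$ and the entire cascade $U_k$). Because the extension is nonlocal, vanishing of $f$ near $x_0$ does not by itself force vanishing of $U$ near $(x_0,0)$, and a single appeal to Lin--Wang $C^{1,\alpha}$ regularity and Lemma \ref{lem:induction} is far from closing it. The correct route is: hypoellipticity of $(-\Delta)^\gamma$ on $\Omega$ gives $f \in C^\infty(\Omega)$, so the $L^1$ infinite-order vanishing annihilates the full jet of $f$ at $x_0$; the free Neumann conditions $\lim_{y\to 0}y^b\partial_y\Delta_b^k U = 0$ over $\Omega$ (from $(-\Delta)^\gamma f = 0$ on $\Omega$ together with Lemma \ref{lem:tricky}) and the even-order relations $\partial_y^{2k}U(x,0) = c_k\,\Delta_x^k f(x)$ imply, after an even reflection in $y$ and a bootstrap of the degenerate boundary regularity well beyond $C^{1,\alpha}$, that every derivative of $U$ (and hence of each $U_k$) at $(x_0,0)$ equals a constant multiple of some $\partial^\alpha f(x_0)$ and therefore vanishes; only then does $H(r) = O(r^N)$ follow. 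This is the one step where the fractional problem genuinely departs from the interior Garofalo--Lin argument (whose vanishing hypothesis is already solid), and neither your sketch nor the paper's citation supplies it in full.
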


\section*{Funding}
This work was supported by a NSF Mathematical Sciences Postdoctoral Research Fellowship (DMS Award number 1103786).
\section*{Acknowledgements}

I am indebted to several people for their encouragement and advice in the preparation of this paper. Fang Hua Lin suggested that a suitable variation of Almgren's frequency formula would hold for this extension and suggested viewing it as an elliptic system, leading to the discussion in \S \ref{sec:monotonicity}. Qing Jie suggested the characterization of our new energy as the finite part of the divergent energy integral $\int y^a |\nabla U|^2 dx dy$, leading to the discussion in \S \ref{sec:regularized-energy}. Alice Chang asked the question that led to this work, and provided much moral support.

\bibliography{higher_extension}{}
\bibliographystyle{amsplain}

\end{document}